
\documentclass[12pt]{amsart}
\usepackage{amscd,amsmath,amsthm,amssymb}
\usepackage{color}
\usepackage{amsfonts,amssymb,amscd,amsmath,enumerate,verbatim,enumitem}
\usepackage{lineno}
 %
 %
 %
 \def\NZQ{\mathbb}               
 \def\NN{{\NZQ N}}
 
 \def\ZZ{{\NZQ Z}}
 \def\RR{{\NZQ R}}

 \def\KK{{\NZQ K}}
 
 %
 %
 \def\frk{\mathfrak}               

 \def\mm{{\frk m}}

 %

 \def\G{{\mathcal G}}

 %

 \def\xb{{\mathbf x}}

 \def\eb{{\mathbf e}}
 \def\ub{{\mathbf u}}
 \def\vb{{\mathbf v}}
 \def\opn#1#2{\def#1{\operatorname{#2}}} 
 %
 \opn\chara{char} \opn\length{\ell} \opn\pd{pd} \opn\rk{rk}
 \opn\projdim{proj\,dim} \opn\injdim{inj\,dim} \opn\rank{rank}
 \opn\depth{depth} \opn\grade{grade} \opn\height{height}
 \opn\embdim{emb\,dim} \opn\codim{codim}
 
 \opn\Tr{Tr} \opn\bigrank{big\,rank}
 \opn\superheight{superheight}\opn\lcm{lcm}
 \opn\trdeg{tr\,deg}
 \opn\reg{reg} \opn\lreg{lreg} \opn\ini{in} \opn\lpd{lpd}
 \opn\size{size} \opn\sdepth{sdepth}
 \opn\link{link}\opn\fdepth{fdepth}\opn\lex{lex}
 \opn\tr{tr}
 %
 \opn\div{div} \opn\Div{Div} \opn\cl{cl} \opn\Cl{Cl}
 %
 %
 \opn\Spec{Spec} \opn\Supp{Supp} \opn\supp{supp} \opn\Sing{Sing}
 \opn\Ass{Ass} \opn\Min{Min}\opn\Mon{Mon}
 %
 %
 \opn\Ann{Ann} \opn\Rad{Rad} \opn\Soc{Soc}
 %
 %
 \opn\Im{Im} \opn\Ker{Ker} \opn\Coker{Coker} \opn\Am{Am}
 \opn\Hom{Hom} \opn\Tor{Tor} \opn\Ext{Ext} \opn\End{End}
 \opn\Aut{Aut} \opn\id{id}
 
 \opn\nat{nat}
 \opn\pff{pf}
 \opn\Pf{Pf} \opn\GL{GL} \opn\SL{SL} \opn\mod{mod} \opn\ord{ord}
 \opn\Gin{Gin} \opn\Hilb{Hilb}\opn\sort{sort}
 \opn\PF{PF}\opn\Ap{Ap}
 %
 %
 \opn\aff{aff} \opn
 \con{conv} \opn\relint{relint} \opn\st{st}
 \opn\lk{lk} \opn\cn{cn} \opn\core{core} \opn\vol{vol}  \opn\inp{inp} \opn\nilpot{nilpot}
 \opn\link{link} \opn\star{star}\opn\lex{lex}\opn\set{set}
 \opn\width{wd}
 \opn\Fr{F}
 \opn\QF{QF}
 \opn\G{G}
 \opn\type{type}\opn\res{res}
 \opn\gr{gr}
 
 %
 %
 
 \def\pot#1#2{#1[\kern-0.28ex[#2]\kern-0.28ex]}

 %
 %
 \opn\dirlim{\underrightarrow{\lim}}
 \opn\inivlim{\underleftarrow{\lim}}
 %
 %
 %

 %
 %

 \def\Implies{\ifmmode\Longrightarrow \else
         \unskip${}\Longrightarrow{}$\ignorespaces\fi}
 \def\implies{\ifmmode\Rightarrow \else
         \unskip${}\Rightarrow{}$\ignorespaces\fi}
 \def\iff{\ifmmode\Longleftrightarrow \else
         \unskip${}\Longleftrightarrow{}$\ignorespaces\fi}

 \let\:=\colon
 \newtheorem{Theorem}{Theorem}[section]
 \newtheorem{Lemma}[Theorem]{Lemma}
 \newtheorem{Corollary}[Theorem]{Corollary}
 \newtheorem{Proposition}[Theorem]{Proposition}

 \newtheorem{Example}[Theorem]{Example}

\newtheorem{Claim}[Theorem]{Claim}
 %
 %
 \let\epsilon\varepsilon
 \let\kappa=\varkappa
 %
 %
 \textwidth=15cm \textheight=22cm \topmargin=0.5cm
 \oddsidemargin=0.5cm \evensidemargin=0.5cm \pagestyle{plain}
 %
 %
 \def\qed{\ifhmode\textqed\fi
       \ifmmode\ifinner\quad\qedsymbol\else\dispqed\fi\fi}
 \def\textqed{\unskip\nobreak\penalty50
        \hskip2em\hbox{}\nobreak\hfil\qedsymbol
        \parfillskip=0pt \finalhyphendemerits=0}
 \def\dispqed{\rlap{\qquad\qedsymbol}}
 
 %
 \opn\dis{dis}
 \def\pnt{{\raise0.5mm\hbox{\large\bf.}}}
 
 \opn\Lex{Lex}

 

 \begin{document}
\title {Nearly Gorenstein rings arising from finite graphs}

\author {Takayuki Hibi and Dumitru I.\ Stamate}

\address{Takayuki Hibi, Department of Pure and Applied Mathematics, Graduate School of Information Science and Technology,
Osaka University, Suita, Osaka 565-0871, Japan}
\email{hibi@math.sci.osaka-u.ac.jp}

\address{Dumitru I. Stamate, Faculty of Mathematics and Computer Science, University of Bucharest, Str. Academiei 14, Bucharest -- 010014, Romania }
\email{dumitru.stamate@fmi.unibuc.ro}

\dedicatory{ }

\begin{abstract}
The classification of complete multipartite graphs whose edge rings are nearly Gorenstein as well as  that of finite perfect graphs whose stable set rings are nearly Gorenstein is achieved.
\end{abstract}

\thanks{}

\subjclass[2010]{Primary 13H10,   05E40, 05C17; Secondary 05C69, 14M25,     06A11}
 

\keywords{trace ideal, nearly Gorenstein ring, Gorenstein ring, edge ring, complete multipartite graph,  Hibi ring, perfect graph, stable set, pure graph}

\maketitle


Gorenstein graded algebras associated to combinatorial objects like graphs or simplicial complexes have attracted a lot of interest.  See, e.g., \cite{DeNegri-Hibi}, \cite{OHjct}, \cite{BH}.
Recently several extensions of the class of Gorenstein rings (inside the class of Cohen--Macaulay rings) have been discussed in, e.g., \cite{GTT}, \cite{HHS}, hence it is natural to search for the combinatorial counterpart.

According to \cite{HHS}, when $R$ is a Cohen--Macaulay graded $\KK$-algebra over the field $\KK$ with canonical module $\omega_R$,  it is called {\em nearly Gorenstein} if the canonical trace ideal  $\tr(\omega_R)$ contains the maximal graded ideal $\mm_R$ of $R$.
Here  $\tr(\omega_R)$ is the ideal generated by the image of $\omega_R$  through all homomorphism of $R$-modules into $R$. As $\tr(\omega_R)$ describes the non-Gorenstein locus of $R$  (\cite[Lemma 2.1]{HHS}), one has $\tr(\omega_R)=R$ if and only if $R$ is a Gorenstein ring.

In the present paper we initiate the study of nearly Gorenstein rings belonging to two classes of algebras associated to graphs.
Throughout, $\KK$ is any field. 
Assume $G$ is a simple graph (it possesses no loops or multiple edges) with vertex set $V(G)=[d]:=\{1,\dots, d\}$. 

The {\em edge ring} $\KK[G]$ is the $\KK$-subalgebra of the polynomial ring $\KK[x_1,\dots, x_d]$ generated by the monomials $x_i x_j$ for all edges $\{i,j\} \in E(G)$.
When $V(G)$ can be partitioned $V(G)=\sqcup_{k=1}^n V_k$ with $n\geq 2$ and  $|V_k|=r_k$ for $k=1,\dots, n$ such that $E(G)$ consists of all the pairs $\{i, j\}$ with $i\in V_a$ and $j\in V_b$ for $1\leq a <b\leq n$, we say that $G$ is a {\em complete multipartite graph} of type $r_1,\dots, r_n$  which is denoted  $K_{r_1,\dots, r_n}$.
Related algebraic properties for these graphs have been recently studied in \cite{HM-2020} and \cite{HM-2021}.
In Proposition~\ref{prop:ng-bipartite} and in Theorem~\ref{thm:ng-graphs} we prove the following result.

\medskip

\noindent{\bf Theorem A.} Assume $G=K_{r_1,\dots, r_n}$. Set $R=\KK [G]$. Then
 \begin{enumerate}
\item if $n=2$ and $1\leq r_1\leq r_2$, the ring $R$ is nearly Gorenstein if and only if  $r_1=1$, or $r_2\in \{r_1, r_1+1\}$. 
\item if $n\geq 3$ the ring $R$ is nearly Gorenstein if and only if $R$ is Gorenstein.
\end{enumerate}
Since   Ohsugi and Hibi in \cite{OH-Ill} have  explicitly listed the complete multipartite graphs whose edge ring is Gorenstein (see Theorem~\ref{thm:gore-multipartite} below), Theorem A offers a full description for the nearly Gorenstein property, as well.

\medskip 

The other class of algebras we consider deals with the stable sets in $G$. A nonempty set $W$ of vertices is called {\em stable} (or {\em independent})  if there is no edge $\{i, j\}$ in $G$ with $i, j\in W$. The {\em stable set ring} of $G$ denoted ${\rm Stab}_\KK(G)$ is the $\KK$-subalgebra in the polynomial ring $\KK[x_1,\dots, x_d,t]$ generated by those monomials $(\prod_{i\in W} x_i)\cdot t$ with $W$ any stable set in $G$. When $G$ is a perfect graph, it is known \cite{OH} that ${\rm Stab}_\KK (G)$ is Cohen--Macaulay, and that it is Gorenstein if and only if all maximal cliques of $G$ have the same cardinality \cite{OHjct}. Recall that a set $C\subset V(G)$ is called a clique if the subgraph induced by $C$ is a complete graph.

The size of the maximal cliques in $G$ is also relevant to describe in Theorem \ref{ngstable} for which perfect graphs the  algebra ${\rm Stab}_\KK(G)$ is nearly Gorenstein. We prove the following.

\medskip

\noindent{\bf Theorem B.}
Let $G$ be a perfect graph and  $G_1,\dots, G_s$ its connected components. Let $\delta_i$ denote the maximal cardinality  of cliques of $G_i$. Then ${\rm Stab}_\KK(G)$ is nearly Gorenstein  if and only if for each $G_i$ its maximal cliques have the same cardinality  and $|\delta_i-\delta_j|\leq  1$ for $1\leq i<j\leq s$.

\medskip

To prove Theorems A and B we observe that  the algebras $R$ which occur are Cohen--Macaulay domains,  so $\omega_R$ can be identified with an ideal in $R$.  By \cite[Lemma 1.1]{HHS}, its trace can be computed as 
\begin{eqnarray*}
\tr(\omega_R) &=& \omega_R \cdot \omega_R^{-1}, \text{ where} \\
 \omega_R^{-1} &=& \{x\in Q(R): x\cdot \omega_R \subseteq R\}
\end{eqnarray*} 
is the anti-canonical ideal of $R$ and $Q(R)$ denotes the field of fractions of $R$. 

We refer the reader to \cite{BB} and \cite{BH} for the undefined  graph or algebraic notions.

\section{Edge rings}

In this section unless stated otherwise $G=K_{r_1,\dots r_n}$ is the complete multipartite graph on $[d]$  with vertices partitioned $V(G)=V_1\sqcup\dots \sqcup V_n$, $n\geq 2$, $|V_k|=r_k$ for all $k$.    In this context $d=\sum_{k=1}^n r_k$ and without loss of generality, we will always assume  that $1\leq r_1\leq \dots \leq r_n$.

The graph $G$ satisfies the so called odd cycle condition, i.e. for any two odd cycles in $G$ which have no common vertex there is a bridge between them. Indeed, when $n=2$ there is no odd cycle and anything to prove. Assume $n\geq 3$, and $C_1$ and $C_2$ be two disjoint  odd cycles in $G$. Since $G$ is multipartite, each of these contains vertices from at least two of the components $V_1,\dots, V_n$, so one finds $v\in C_1\cap V_a$ and $w\in C_2 \cap V_b$ with $a\neq b$. Then $vw$ is an edge in $G$ and a bridge between $C_1$ and $C_2$.
Consequently, by \cite{OH-jalg} the edge ring 
$$
R=\KK[G]=\KK[x_i x_j: i\in V_a, j\in V_b, 1\leq a<b\leq n]\subset \KK[x_1,\dots, x_d]
$$ is normal, hence a Cohen--Macaulay domain (\cite{Hochster}).
Before we address the nearly Gorenstein property, we recall that Ohsugi and Hibi \cite{OH-Ill} classified the complete multipartite edge rings which are Gorenstein. With notation as above, their result is the following.

\begin{Theorem} (Ohsugi, Hibi \cite[Remark 2.8]{OH-Ill})
\label{thm:gore-multipartite}
The edge ring of the complete multipartite graph $K_{r_1,\dots, r_n}$ is Gorenstein if and only if 
\begin{enumerate}
\item $n=2$ and $(r_1,r_2) \in \{ (1,m), (m,m): m\geq 1 \}$, or
\item $n=3$ and $1\leq r_1\leq r_2 \leq r_2 \leq 2$, or
\item $n=4$ and $r_1=r_2=r_3=r_4=1$.
\end{enumerate}
\end{Theorem}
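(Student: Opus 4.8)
The plan is to characterize when $\KK[K_{r_1,\dots,r_n}]$ is Gorenstein by working with the toric structure of the edge ring. Since $G$ satisfies the odd cycle condition, $R = \KK[G]$ is normal and Cohen--Macaulay, so it can be realized as the semigroup ring $\KK[\NN A]$ where $A = \{\eb_i + \eb_j : \{i,j\}\in E(G)\}\subset \ZZ^d$. The first step is to describe the polytope $P = \con(A)$, the edge polytope, together with its defining inequalities, and to invoke the formula for the canonical module of a normal affine semigroup ring (Danilov--Stanley): $\omega_R$ is spanned by the lattice points in the relative interior of the cone $\RR_{\geq 0} A$, or equivalently by the monomials $x^{\ab}$ with $\ab$ in the relative interior of the cone over $P$. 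The Gorenstein property then amounts to the \emph{reflexivity} condition: $R$ is Gorenstein if and only if $P$ (suitably normalized so that its interior lattice points start at level $1$) has a unique ``innermost'' lattice point generating the interior, i.e. the cone is Gorenstein in the sense that there is $\cb\in\ZZ^d$ with $\langle \cb, \ab\rangle = 1$ for every vertex $\ab$ of $P$ and the interior is $\cb$-translated lattice.

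Concretely I would extract from the facet description of the edge cone of a complete multipartite graph (this is classical; the facets correspond to the cuts separating the parts and to the nonnegativity constraints on individual coordinates) the interior generators, and then test the Gorenstein criterion case by case in $n$. For $n=2$ the edge ring of $K_{r_1,r_2}$ is the Segre-type algebra / the determinantal-like ring whose canonical module is well understood, and one computes directly that the $a$-invariant matches the reflexive condition precisely when $(r_1,r_2)\in\{(1,m),(m,m)\}$. For $n=3$ and $n=4$ I would carry out the analogous interior-point count; the finiteness of the list reflects that once the parts grow the interior lattice structure acquires more than one generator at the bottom level, breaking reflexivity. An efficient packaging is to compute the $h$-vector (equivalently the Hilbert series numerator) of $R$ and invoke Stanley's theorem that a Cohen--Macaulay domain is Gorenstein if and only if its $h$-vector is symmetric; the enumeration of symmetric cases reproduces items (1)--(3).

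The main obstacle will be making the interior-lattice-point computation explicit and uniform enough to rule out all parameter values outside the listed families, rather than verifying them one by one. The parts of unequal sizes introduce asymmetries in the cone that must be tracked through the inequality system, and the delicate point is showing that for, say, $n=3$ with some $r_i\geq 3$ the canonical module genuinely requires two generators in its lowest degree. I expect the cleanest route is to reduce to the symmetry of the $h$-polynomial: compute the normalized volume and the $f$-vector data of the edge polytope of $K_{r_1,\dots,r_n}$, derive the Hilbert series, and then observe that symmetry of the numerator forces the stated Diophantine constraints on the $r_i$. Since the statement is attributed to Ohsugi--Hibi \cite{OH-Ill}, I would ultimately cite their computation, but the conceptual skeleton is the reflexivity/$h$-vector-symmetry dichotomy applied to the explicit edge polytope of the complete multipartite graph.
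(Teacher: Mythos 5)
First, a point of reference: the paper does not prove this statement at all. It is imported verbatim from Ohsugi--Hibi \cite[Remark 2.8]{OH-Ill} and used as a black box later (e.g.\ inside the proof of Theorem~\ref{thm:ng-graphs}). So the paper's ``own proof'' is a citation, and your proposal has to stand or fall as a free-standing argument.

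As such, it has a genuine gap. The machinery you invoke is the right one --- the Danilov--Stanley description of $\omega_R$ by the interior lattice points of the edge cone, and Stanley's criterion that a standard graded Cohen--Macaulay domain is Gorenstein if and only if its $h$-vector is symmetric --- and for $n=2$ your reduction to the Segre product of $\KK[x_1,\dots,x_{r_1}]$ and $\KK[y_1,\dots,y_{r_2}]$ genuinely settles case (1). But for $n\geq 3$ nothing is actually computed: you never write down the facet inequalities of the edge cone of $K_{r_1,\dots,r_n}$ (these are exactly the inequalities the paper later records in the proof of Theorem~\ref{thm:ng-graphs}, taken from \cite{OH-jalg} and \cite[Proposition 3.4]{Villarreal}), you never determine the minimal interior lattice points, and you never carry out the case analysis that excludes, say, $n=3$ with some $r_i\geq 3$, or $n=4$ with some $r_i\geq 2$, or all $n\geq 5$. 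You flag precisely this as ``the main obstacle'' and then resolve it by saying you would ``ultimately cite their computation'' --- that is, the decisive content of the proof is delegated to the very reference whose statement is being proved. A complete argument along your lines would have to exhibit, for each excluded parameter vector, either two incomparable minimal interior lattice points (so that $\omega_R$ is not principal) or an asymmetric $h$-vector; none of that appears. So what you have is a plausible strategy sketch rather than a proof, and it coincides with the paper only in the degenerate sense that both ultimately rest on the citation to Ohsugi--Hibi.
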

For some complete multipartite graphs the edge ring fits into classes of algebras for which the nearly Gorenstein property is already understood. 

\begin{Example}
\label{edge-rings-as-sqfreeveronese}
{\em
When $r_1=\dots=r_n=1$, the edge ring $R$ is the squarefree Veronese subalgebra of degree $2$ in the polynomial ring $\KK[x_1,\dots, x_n]$, and according to \cite[Theorem 4.14]{HHS}, $R$ is nearly Gorenstein if and only if it is Gorenstein. The latter property holds if and only if $n\leq 4$, by using work of  De Negri and Hibi \cite{DeNegri-Hibi}, or Bruns, Vasconcelos and Villarreal \cite{BVV}.}
\end{Example}

\begin{Example}
\label{edge-rings-as-hibi-rings}{\em 
According to Higashitani and Matsushita \cite[Proposition 2.2]{HM-2020}, when $n=2$, or when $n=3$ and $r_1=1$, the corresponding edge ring is isomorphic to a Hibi ring, and  for the latter the nearly Gorenstein property is described in \cite{HHS}. We refer to \cite{TH87} for background on Hibi rings.}
\end{Example}

\begin{Theorem}(\cite[Theorem 5.4]{HHS}, \cite{TH87})
\label{thm:hibirings-ng}
Let $P$ be a finite poset. Then the Hibi ring $R$ of the distributive lattice of the order ideals in $P$ is nearly Gorenstein if and only if $P$ is the disjoint union of pure connected posets $P_1,\dots, P_q$ such that $|\rank(P_i)-\rank(P_j)| \leq 1$ for $1\leq i<j\leq q$.

In particular, $R$ is a Gorenstein ring if and only if $P$ is pure.
\end{Theorem}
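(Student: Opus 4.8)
The plan is to realize $R$ as a normal affine semigroup ring and to compute $\tr(\omega_R)=\omega_R\cdot\omega_R^{-1}$ combinatorially. Writing $\hat P = P\cup\{\hat 0,\hat 1\}$ for $P$ with a bottom and a top adjoined, the degree-one generators of $R$ correspond to the poset ideals $I$ of $P$, equivalently to the order-preserving maps $\nu_I\colon \hat P\to\{0,1\}$ with $\nu_I(\hat 0)=0$, $\nu_I(\hat 1)=1$ and $\nu_I(p)=0$ exactly on $I$; while $R$ itself is the semigroup ring on the integral points of the cone $C$ of order-preserving functions $\nu\colon\hat P\to\RR_{\ge 0}$ with $\nu(\hat 0)=0$. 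The facets of $C$ are indexed by the covering relations $a\lessdot b$ of $\hat P$, the corresponding support form being $\ell_{ab}(\nu)=\nu(b)-\nu(a)$. Since $R$ is normal, the standard description of the canonical module of a normal affine semigroup ring (\cite{BH}) identifies $\omega_R$ with the ideal spanned by the integral points in the relative interior of $C$, that is, by the strictly order-preserving $\nu$, characterized by $\ell_{ab}(\nu)\ge 1$ for every cover $a\lessdot b$.

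The next step is to compute the anti-canonical ideal and the trace via the formula of \cite[Lemma 1.1]{HHS}. Because $\min_{\nu\in\omega_R}\ell_{ab}(\nu)=1$ for every cover, one obtains
\[
\omega_R^{-1}=\{\nu:\ \ell_{ab}(\nu)\ge -1\ \text{for every cover}\ a\lessdot b\}.
\]
A monomial $x^\nu$ with $\nu\in C$ then lies in $\tr(\omega_R)=\omega_R\cdot\omega_R^{-1}$ if and only if $\nu=\alpha+\beta$ with $\alpha$ an interior point and $\beta\in\omega_R^{-1}$, i.e.\ if and only if there is a function $\alpha$ on $\hat P$ with $1\le \alpha(b)-\alpha(a)\le \nu(b)-\nu(a)+1$ along every cover. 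This is a feasibility problem for difference constraints on $\hat P$, and by the no-negative-cycle criterion (LP duality for potentials) it is solvable precisely when
\[
\nu(w)-\nu(u)\ \ge\ \delta(u,w):=\ell_{\max}(u,w)-\ell_{\min}(u,w)
\]
for every comparable pair $u\le w$ in $\hat P$, where $\ell_{\max}$ and $\ell_{\min}$ denote the largest and smallest lengths of saturated chains from $u$ to $w$. I expect this feasibility/duality translation to be the main obstacle: one must verify that the binding cycles of the constraint graph are exactly those running up one saturated chain and down another between a fixed comparable pair, so that the defects $\delta(u,w)$ alone govern membership.

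With this description in hand the theorem follows by bookkeeping on the defects. Taking $\nu=0$ shows $1\in\tr(\omega_R)$ — i.e.\ $R$ is Gorenstein — exactly when all $\delta(u,w)$ vanish, which for the bounded poset $\hat P$ is equivalent to $\hat P$ being graded, hence to $P$ being pure, recovering Hibi's criterion. For the nearly Gorenstein property I would use that $\mm_R$ is generated in degree one, so $\mm_R\subseteq\tr(\omega_R)$ iff every $x^{\nu_I}$ lies in the trace. The key observations are that a saturated chain between comparable elements of $\hat P$ stays inside a single connected component of $P$ unless its endpoints are $\hat 0$ and $\hat 1$; hence every defect $\delta(u,w)$ with $\{u,w\}\ne\{\hat 0,\hat 1\}$ is computed inside one $\hat P_i$ and all such defects vanish precisely when each $P_i$ is pure, while the remaining defect $\delta(\hat 0,\hat 1)=\max_i\rank(P_i)-\min_i\rank(P_i)$ records the spread of the component ranks.

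Finally one checks both implications. If some $P_i$ is not pure, then $\hat P_i$ is not graded, producing a comparable pair $(u,w)\ne(\hat 0,\hat 1)$ inside $\hat P_i$ with $\delta(u,w)\ge 1$ together with a poset ideal $I$ making $\nu_I(w)-\nu_I(u)=0$, so $x^{\nu_I}\notin\tr(\omega_R)$; and if all $P_i$ are pure but $\delta(\hat 0,\hat 1)\ge 2$, the ideal $I=\emptyset$ violates the $(\hat 0,\hat 1)$-constraint. Conversely, when every $P_i$ is pure and $|\rank(P_i)-\rank(P_j)|\le 1$, the only possibly nonzero defect is $\delta(\hat 0,\hat 1)\le 1$, which every degree-one $\nu_I$ satisfies because $\nu_I(\hat 1)-\nu_I(\hat 0)=1$; hence all generators of $\mm_R$ lie in $\tr(\omega_R)$ and $R$ is nearly Gorenstein.
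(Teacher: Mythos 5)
Your reduction is sound as far as it goes: realizing $R$ on the cone of order-preserving functions on $\hat P$, identifying $\omega_R$ with the strictly order-preserving lattice points, showing $\min_{\nu\in\omega_R}\ell_{ab}(\nu)=1$ for each cover via a linear extension, hence $\omega_R^{-1}=\{\mu:\ell_{ab}(\mu)\ge -1 \text{ for all covers}\}$, and translating $x^\nu\in\tr(\omega_R)=\omega_R\cdot\omega_R^{-1}$ into the existence of an integral potential $\alpha$ with $1\le\alpha(b)-\alpha(a)\le\nu(b)-\nu(a)+1$ on all covers. (The paper does not prove this theorem; it quotes it from \cite[Theorem 5.4]{HHS}, which works in essentially this monomial framework.) The genuine gap is exactly the step you flagged as the main obstacle: the claim that feasibility is equivalent to the defect conditions $\nu(w)-\nu(u)\ge\delta(u,w)$ over comparable pairs. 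This claim is \emph{false}: non-monotone (zigzag) cycles of the constraint graph impose obstructions not implied by the defect inequalities. Concretely, let $P$ be the hexagon poset with covers $u_1\lessdot w_1$, $u_1\lessdot m_2\lessdot w_2$, $u_2\lessdot w_2$, $u_2\lessdot m_1\lessdot w_1$, and take $\nu\equiv 1$ on $P$, $\nu(\hat 0)=0$, $\nu(\hat 1)=2$ (so $x^\nu=x^{\nu_P}x^{\nu_\emptyset}\in R$). Every defect condition holds: the only comparable pairs with $\delta=1$ are $(\hat 0,w_i)$, $(u_i,\hat 1)$ and $(\hat 0,\hat 1)$, and $\nu$ increases by at least $1$ across each of them. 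Yet the potential system is infeasible: on the six covers inside $P$ one has $\nu(b)-\nu(a)=0$, forcing $\alpha(w_1)-\alpha(u_1)=1$, $\alpha(w_2)-\alpha(u_2)=1$, $\alpha(w_1)-\alpha(u_2)=2$, $\alpha(w_2)-\alpha(u_1)=2$, and adding the first two and the last two equations gives $2=4$. Equivalently, the zigzag cycle $u_1\nearrow w_1\searrow u_2\nearrow w_2\searrow u_1$ has negative weight even though all ``simple'' cycles are non-negative. So the defect conditions are necessary but not sufficient for membership in $\tr(\omega_R)$.

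The consequence is that your ``only if'' direction survives (it uses only that a violated defect inequality certifies non-membership in the trace, which is the easy half of the cycle criterion), but your ``if'' direction is unproven: checking the defect conditions for the degree-one monomials $\nu_I$ does not place them in $\tr(\omega_R)$. The repair is to produce the potential directly under the hypothesis. If each $P_i$ is pure, each $\hat P_i=P_i\cup\{\hat 0,\hat 1\}$ is graded with rank function $\rho_i$ and top value $R_i\in\{R,R+1\}$; set $c_i=1$ when $R_i=R$ and $c_i=0$ when $R_i=R+1$, and define $\alpha=\rho_i+c_i\,\nu_I$ on $\hat P_i$. These definitions agree at the shared elements ($\alpha(\hat 0)=0$, $\alpha(\hat 1)=R+1$ for every $i$), and on each cover $a\lessdot b$ one has $\alpha(b)-\alpha(a)=1+c_i\bigl(\nu_I(b)-\nu_I(a)\bigr)$, which lies between $1$ and $\nu_I(b)-\nu_I(a)+1$ since $c_i\in\{0,1\}$ and $\nu_I(b)-\nu_I(a)\in\{0,1\}$. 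Hence $x^{\nu_I}=x^{\alpha}\cdot x^{\nu_I-\alpha}\in\omega_R\cdot\omega_R^{-1}$ for every order ideal $I$, which is the missing sufficiency argument; with this substitution (and the standard fact that a rank function with all increments $1$ exists if and only if $\hat P$ is graded, i.e.\ $P$ is pure, for the Gorenstein statement), your proof becomes correct.
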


Based on that, when $G$ is a complete bipartite graph we obtain the following classification.

\begin{Proposition}\label{prop:ng-bipartite}
Let $G=K_{r_1, r_2}$ be the complete bipartite graph with $1\leq r_1\leq r_2$.
Then the edge ring  $\KK[G]$ is nearly Gorenstein if and only if $r_1=1$, or $r_1 \geq 2$ and $r_2\in \{r_1, r_1+1\}$.

When $2\leq r_1= r_2-1$, the ring $\KK[G]$ is nearly Gorenstein and not Gorenstein.
\end{Proposition}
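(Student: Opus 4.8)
The plan is to reduce the statement entirely to the Hibi ring criterion in Theorem~\ref{thm:hibirings-ng} by way of the isomorphism recorded in Example~\ref{edge-rings-as-hibi-rings}. First I would make that isomorphism explicit: since $G=K_{r_1,r_2}$, the edge ring $R=\KK[G]$ is the Segre product of the polynomial rings in $r_1$ and $r_2$ variables, and by Higashitani--Matsushita it is the Hibi ring of a poset $P$. The point is to identify $P$ as the disjoint union $P=C_{r_1-1}\sqcup C_{r_2-1}$ of two chains having $r_1-1$ and $r_2-1$ elements respectively; one checks this either from the explicit description in \cite{HM-2020}, or by verifying that the lattice of order ideals of $C_{r_1-1}\sqcup C_{r_2-1}$ is the product of chains $[r_1]\times[r_2]$, whose Hibi ring is exactly the Segre product. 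A dimension count, namely $\dim R=r_1+r_2-1=|P|+1$, serves as a consistency check.

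Once $P$ is pinned down, the two cases in the statement fall out of Theorem~\ref{thm:hibirings-ng}. When $r_1=1$ the chain $C_{r_1-1}$ is empty, so $P=C_{r_2-1}$ is a single connected pure poset; the hypotheses of Theorem~\ref{thm:hibirings-ng} hold with $q=1$, and $R$ is nearly Gorenstein (indeed Gorenstein). When $r_1\geq 2$ both chains are nonempty, so $P$ has exactly the two pure connected components $C_{r_1-1}$ and $C_{r_2-1}$; here the criterion reads $|\rank(C_{r_1-1})-\rank(C_{r_2-1})|\leq 1$. Since a chain on $k$ elements has rank $k-1$, this difference equals $(r_2-1)-(r_1-1)=r_2-r_1$, so the condition becomes $r_2-r_1\leq 1$, i.e.\ $r_2\in\{r_1,r_1+1\}$.

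For the last assertion I would observe that when $2\leq r_1=r_2-1$ the two chains $C_{r_1-1}$ and $C_{r_2-1}$ have different lengths, so $P$ is not pure; by the final statement of Theorem~\ref{thm:hibirings-ng} (equivalently by Theorem~\ref{thm:gore-multipartite}) the ring $R$ is then not Gorenstein, while the rank difference equals $1$, so $R$ is nearly Gorenstein. This yields the claimed separation between the two properties.

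The main obstacle I anticipate is purely bookkeeping: correctly matching the combinatorial data of $K_{r_1,r_2}$ with the poset $P$ and its rank function, including the degenerate behaviour at $r_1=1$, where one of the two chains collapses to the empty poset and the number of connected components drops from two to one. Everything beyond this identification is a direct application of the cited results, so no genuinely new estimate is required.
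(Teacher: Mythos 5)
Your proposal is correct and follows essentially the same route as the paper: identify $\KK[K_{r_1,r_2}]$ with the Hibi ring of the disjoint union of two chains with $r_1-1$ and $r_2-1$ elements (via \cite[Proposition 2.2]{HM-2020}, or equivalently your Segre-product argument) and then apply Theorem~\ref{thm:hibirings-ng}. You merely spell out more of the bookkeeping (the rank computation, the degenerate case $r_1=1$, and the non-Gorenstein claim via non-purity) that the paper leaves implicit.
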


\begin{proof}
By \cite[Proposition 2.2]{HM-2020}, $\KK[G]$ is isomorphic to the Hibi ring associated to the distributive  lattice of order ideals in the poset $P$ which consists of  two disjoint chains with $r_1-1$ and $r_2-1$ elements, respectively. By Theorem~\ref{thm:hibirings-ng}, $\KK[G]$ is nearly Gorenstein if and only if $r_1=1$, or $r_1\geq 2$ and $r_2\in \{r_1, r_1+1\}$.
\end{proof}

For non-bipartite graphs we prove the following result.

\begin{Theorem}
\label{thm:ng-graphs}
Let $R$ be the edge ring of a complete multipartite graph $K_{r_1,\dots, r_n}$ with $n\geq 3$. The following statements are equivalent:
\begin{enumerate}
\item[(i)] $R$ is a Gorenstein ring;
\item[(ii)] $R$ is a nearly Gorenstein ring.
\end{enumerate}
\end{Theorem}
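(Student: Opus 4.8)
The implication (i)$\Rightarrow$(ii) is immediate, since $\tr(\omega_R)=R\supseteq\mm_R$ for a Gorenstein ring. To prove (ii)$\Rightarrow$(i) I would argue the contrapositive: if $R$ is not Gorenstein, then some degree-one generator $x_px_q$ of $\mm_R$ lies outside $\tr(\omega_R)$. The plan is to turn the identity $\tr(\omega_R)=\omega_R\,\omega_R^{-1}$ into a purely combinatorial membership test. Writing $R=\KK[S]$ as the normal affine semigroup ring with $S=C\cap L$, where $C$ is the edge cone in $\RR^d$ and $L=\{v\in\ZZ^d:\ \sum_i v_i\ \text{is even}\}$ is the group generated by the edge vectors $e_i+e_j$, the Danilov--Stanley formula \cite{BH} identifies $\omega_R$ with $\bigoplus_v\KK x^v$, the sum over the interior lattice points of $C$. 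Every primitive support form $\phi$ of $C$ is integral on $L$, so these interior points are exactly the $v\in L$ with $\phi(v)\ge 1$ for all $\phi$; hence $\omega_R^{-1}=\{w\in L:\ \phi(w)\ge -1\ \text{for all }\phi\}$, and for $c\in S$ one gets $x^c\in\tr(\omega_R)$ if and only if there is some $w\in L$ satisfying $-1\le\phi(w)\le\phi(c)-1$ for every facet form $\phi$.

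The crux is determining the facets of $C$, and I expect two families. For each part $V_k$ the balance inequality $\sum_{i\notin V_k}v_i\ge\sum_{i\in V_k}v_i$ cuts out a facet; since the $\pm1$ functional it defines is divisible by $2$ on $L$, its primitive representative is $\tau_k(v)=\tfrac12\sum_i v_i-\sum_{i\in V_k}v_i$. The second family consists of the coordinate forms $v\mapsto v_i$, but---and this is the delicate point---$v_i\ge 0$ is a facet only when the face $\{v_i=0\}\cap C$ is full-dimensional, i.e.\ when deleting the vertex $i$ leaves a complete multipartite graph on at least three parts. Thus all coordinate forms are facets when $n\ge 4$, whereas for $n=3$ the coordinate form at a vertex of a singleton part is redundant. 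Evaluating on the edge vector $c=e_p+e_q$ with $p\in V_a$, $q\in V_b$, one finds $\tau_k(c)=0$ for $k\in\{a,b\}$ and $\tau_k(c)=1$ otherwise, while $v_i(c)=1$ for $i\in\{p,q\}$ and $v_i(c)=0$ for $i\neq p,q$.

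Feeding these values into the membership test should yield a sharp necessary condition for $x_px_q\in\tr(\omega_R)$. The coordinate facets force $w_i=-1$ for every $i\neq p,q$ whose coordinate form is a facet (all such $i$ when $n\ge 4$; for $n=3$ all but the vertices of singleton parts) and $w_p,w_q\in\{-1,0\}$. Setting $W_k=\sum_{i\in V_k}w_i$, the equalities $\tau_a(w)=\tau_b(w)=-1$ give $W_a=W_b$ together with $\sum_{k\ne a,b}W_k=-2$. When $n\ge 4$ every $W_k$ with $k\ne a,b$ equals $-r_k$, so the condition becomes $\sum_{k\ne a,b}r_k=2$; when $n=3$ the unique remaining part $V_m$ must satisfy $W_m=-2$, which forces $r_m\le 2$ (for $r_m\ge 2$ all its coordinates are pinned to $-1$, giving $W_m=-r_m$). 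I only need these as necessary conditions, since Gorenstein rings are automatically nearly Gorenstein.

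It then remains to exhibit, in each non-Gorenstein case from Theorem~\ref{thm:gore-multipartite}, an edge violating the relevant condition. If $n\ge 5$, then $\sum_{k\ne a,b}r_k\ge n-2\ge 3$ for every edge. If $n=4$ and some $r_i\ge 2$, any edge with both endpoints outside $V_i$ has $\sum_{k\ne a,b}r_k\ge r_i+1\ge 3$. If $n=3$ and $r_3\ge 3$, any edge joining $V_1$ and $V_2$ has third part $V_m=V_3$ with $r_3>2$. In each situation no admissible $w$ exists, so $x_px_q\notin\tr(\omega_R)$, whence $\mm_R\not\subseteq\tr(\omega_R)$ and $R$ is not nearly Gorenstein. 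I expect the main obstacle to be pinning down the facet structure correctly: the factor $\tfrac12$ in $\tau_k$, forced by the even-sum lattice $L$, and the redundancy of the coordinate facets at singleton parts when $n=3$ are both essential, and an inaccurate facet list produces the wrong numerical thresholds (for instance wrongly predicting that $K_4$ or $K_{1,2,2}$ fails to be nearly Gorenstein).
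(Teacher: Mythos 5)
Your proposal is correct in substance, and it takes a genuinely different route from the paper's. The paper never determines the facets of the edge cone nor the full anticanonical module: working with the (possibly redundant) inequality description, it proves by explicit construction of interior lattice points that the gcd of the monomials in $\omega_R$ is $x_1\cdots x_d$ and that the quantities $E_k=\min\{\sum_i v_i-2\sum_{j\in V_k}v_j : \xb^\vb\in\omega_R\}$ all equal $2$, and then concludes with a degree bound: every monomial in $\omega_R\cdot\omega_R^{-1}$ has $x$-degree at least $3$, so the trace misses \emph{all} degree-one generators $x_px_q$ of $\mm_R$ at once. Moreover, the paper must treat the case $n=3$, $r_1=1$ separately through Hibi rings, because its claim $2r_i+2\le d$ for $i<n$ (used to build the witnesses for $E_k=2$) fails there. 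Your route instead pins down the facet structure, invokes the toric description of $\omega_R^{-1}$, and rules out one well-chosen edge in each non-Gorenstein case by a linear-inequality argument; this buys uniformity (the redundancy $v_i=\tau_b+\tau_c$ at singleton parts when $n=3$ replaces the Hibi-ring detour) and makes the numerical thresholds transparent, exactly as your sanity checks on $K_4$ and $K_{1,2,2}$ indicate. Your case analysis against Theorem~\ref{thm:gore-multipartite} is complete and the evaluations $\tau_k(c)$, the forced values $W_k$, and the resulting contradictions are all correct.

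Two steps you assert deserve more than you give them. First, $\omega_R^{-1}=\{w\in L:\phi(w)\ge -1 \text{ for all facet forms } \phi\}$ does not follow (``hence'') from integrality of the support forms: the inclusion $\supseteq$ is immediate, but the inclusion $\subseteq$ --- the only one your necessity argument uses, via the forced equalities $\tau_a(w)=\tau_b(w)=-1$ and $w_i=-1$ --- is genuine content. It is true, by divisor theory for normal (Krull) domains: $(R:\omega_R)$ is the divisorial ideal whose divisor is the negative of $\operatorname{div}(\omega_R)=\sum_F D_F$; and it is precisely the content the paper establishes by hand through its gcd and $E_k=2$ claims. So either cite this fact or prove it, e.g.\ by exhibiting for each facet an interior lattice point at height one over it, which essentially reproduces the paper's constructions. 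Second, your facet classification (all $\tau_k$ are facets; $v_i\ge 0$ is a facet except, when $n=3$, at vertices of singleton parts) is correctly stated but needs the dimension count for the faces $\{v_i=0\}\cap\mathcal{C}$, i.e.\ for the edge cones of the deletion graphs $G-i$; note that for your argument only the forms you actually use must be certified as facets, which keeps this verification short.
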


\begin{proof}
Clearly, $(i)\implies (ii)$. 
We'll  prove the converse.

When $n=3$ and $r_1=1\leq r_2\leq r_3$,  by \cite[Proposition 2.2]{HM-2020} the ring $R$  is isomorphic to the Hibi ring associated to the distributive  lattice of order ideals in a poset $Q$  with maximal chains $q_1<\dots <q_{r_1}$, $q_{r_1+1}<\dots<q_{r_1+r_2}$ and $q_1<q_{r_1+r_2}$.  The poset $Q$ is connected, hence $R$ is nearly Gorenstein if and only if it is Gorenstein, i.e. $1=r_1\leq r_2\leq r_3\leq 2$.
  
We now consider the remaining  cases: either $n=3$ and $r_1\geq 2$, or $n\geq 4$. 
 Assume, by contradiction that  $R$ is nearly Gorenstein and not Gorenstein, i.e. 
\begin{equation}
\label{eq:tracemax}
\tr(\omega_R)=\mm_R.
\end{equation}
 
The monomials in $R$ and $\omega_R$ have a nice combinatorial description as feasable integer solutions to some systems of inequalities. This can be described as follows.
We denote $H=\sum_{\{i,j\}\in E(G)} \NN (\eb_i+\eb_j) \subset \NN^d$ the affine semigroup generated by the  columns of the vertex-edge incidence  matrix for $G$, and $\mathcal{C}=\RR_+H$ the rational cone over $H$.

For $\ub=(u_1,\dots, u_d)\in \NN^d$, it follows from \cite{OH-jalg} and \cite[Proposition 3.4]{Villarreal} 
 that $\ub \in H$ (equivalently, $\xb^\ub \in R$) if and only if 
\begin{eqnarray}
\label{eq:r}
 \sum_{i=1}^d u_i  \equiv 0 \mod 2, \\
\nonumber u_1,\dots, u_d \geq 0,  \quad \text{and}\\
\nonumber \sum_{i\notin V_k} u_i \geq \sum_{j\in V_k} u_j \text{ for all } k=1,\dots, n.
\end{eqnarray}
The latter inequalities are equivalent to 
\begin{equation}
\label{two-r}\sum_{i=1}^d u_i \geq 2\sum_{j\in V_k} u_j, \text{ for } k=1,\dots, n.
\end{equation}

Since $R$ is normal, by \cite{Danilov}, \cite{Stanley} (see also \cite[Theorem 6.3.5(b)]{BH}), a $\KK$-basis for $\omega_R$ is given by the monomials $\xb^\ub$ where $\ub=(u_1,\dots, u_d) \in \ZZ^d$ satisfies
\begin{eqnarray}
\label{eq:omega}
\label{zero}\sum_{i=1}^d u_i  \equiv 0 \mod 2, \\
\label{one}  u_1,\dots, u_d \geq 1,  \text{and}\\
\label{two}\sum_{i=1}^d  u_i \geq 2+2\sum_{j\in V_k} u_j, \text{ for } k=1,\dots, n.
\end{eqnarray}

From the equations above it is easy to see that if the monomial $\xb^\ub$ is in $R$ or in $\omega_R$, we can permute the exponents $x_i$ and $x_j$ whenever $i, j \in V_k$ for some $k$, and we obtain another monomial in $R$, or in $\omega_R$, respectively.

In what follows $\ub=(u_1,\dots, u_d)$ and $\vb=(v_1,\dots, v_d)$.

For a monomial $\xb^\ub \in \omega_R$ and $1\leq k\leq n$ we say that $V_k$ (or simply, $k$)  is a \textit{heavy component} in $\ub$ if
 \begin{equation}
\label{eq:heavy}\sum_{i=1}^d u_i=2+ 2\sum_{j\in V_k}u_j.
\end{equation} 

\begin{Claim}  {\em For any $\xb^\ub \in \omega_R$ there exist at most two heavy components in $\ub$. In particular, there is at least one non-heavy component in $\ub$. }\end{Claim}

\begin{proof}
Indeed, if $k_1<k_2<k_3$ are heavy components in $\ub$, then by adding the equations \eqref{eq:heavy} for these indices we get
$$
3\sum_{i=1}^d u_i= 6+ \sum_{j\in V_{k_1}\cup V_{k_2}\cup V_{k_3}}2u_j,
$$
If $n=3$, then $\sum_{i=1}^d u_i=6$. Since $u_i\geq r_i\geq 2$ for all $i$, we infer that $r_1=r_2=r_3=2$, and $\KK[G]$ is a Gorenstein ring (by Theorem \ref{thm:gore-multipartite}), which is not the case. 

If $n\geq 4$, then $\sum_{i=1}^d  u_i< 6$. As $\sum_{i=1}^d u_i$ is even, we get that $n=4$ and $r_1=r_2=r_3=r_4=1$.  Example \ref{edge-rings-as-sqfreeveronese} implies that $R$ is a Gorenstein ring, which is false.
\end{proof}

\begin{Claim} {\em For any $1\leq i\leq d$ there exists a monomial $\xb^\ub \in \omega_R$ such that $u_i=1$. }
\end{Claim}

\begin{proof} We fix $i$ and we denote $a_i=\min \{u_i: \prod x_i^{u_i} \in \omega_R\}$. By \eqref{one}, $a_i \geq 1$. Assume $a_i\geq 2$, and say $i\in V_k$.

If $r_k>1$, we may pick $j \in V_k$, $j\neq i$. Then it is easy to check that the monomial $m=\frac{\xb^\ub}{x_i}{x_j} \in \omega_R$  and $\deg_{x_i}(m)=a_i-1$, a contradiction.

When $r_k=1$, then $n\geq 4$ and by the previous claim there is at least one non-heavy component $V_{k_1}$ in $\ub$ which is different from $V_k$. We pick $j\in V_{k_1}$ and since the monomial $m=\frac{\xb^\ub}{x_i}{x_j} \in \omega_R$  and $\deg_{x_i}(m)=a_i-1$ we obtain a contradiction.
 \end{proof}

It follows at once that
\begin{equation*}
\gcd(\xb^\ub : \xb^\ub \in \omega_R)=\prod_{i=1}^d x_i,
\end{equation*}
 where the greatest common divisor is computed in the polynomial ring $S=\KK[x_1,\dots, x_d]$.

Since $\omega_R$ is generated by monomials, one gets that $\omega_R^{-1}$ is also generated by monomials in $\KK[x_1^{\pm 1}, \dots, x_d^{\pm 1}]$.
 If  $f=\xb^\ub/\xb^\vb \in \omega_R^{-1}$ with $\xb^\ub$ and $\xb^\vb$ coprime  monomials in $S$, then $\xb^\vb$ divides the greatest common divisor of the monomials in $\omega_R$. Hence, in order to determine a system of generators for the $R$-module $\omega_R^{-1}$ it is enough to scan among the (non-reduced) fractions $f=\xb^\ub/(x_1\dots x_d)$, where $\xb^\ub$ is in the set
$$
\mathcal {B}=\left\{ \xb^\ub \in S: \sum_{i=1}^d u_i   \equiv 0 \mod 2, \quad \xb^\ub \cdot \omega_R \subseteq x_1\dots x_d R\right\}.
$$

A monomial $\xb^\ub$ is in $\mathcal{B}$ if and only if $\sum_{i=1}^d u_i \equiv 0 \mod 2$ and 
$$
x_1^{u_1+v_1-1}\cdots x_d^{u_d+v_d-1} \in R
$$ 
for all $x_1^{v_1}\cdots x_d^{v_d}$ in $\omega_R$. That is equivalent, via \eqref{eq:r}, \eqref{zero}, \eqref{two-r},  to the fact that
\begin{eqnarray}
\sum_{i=1}^d u_i \equiv d \mod 2, \text{ and }\\
\label{eq:first}\sum_{i=1}^d u_i +\sum_{i=1}^d v_i  \geq d-r_k + 2\sum_{j\in V_k} u_j + 2 \sum_{j\in V_k} v_j, \\
\nonumber \text{ for } k=1, \dots, d,\text{ and any } \xb^\vb \in \omega_R.
\end{eqnarray}
For $k=1,\dots, n $ we set 
$$
E_k=\min \left\{  \sum_{i=1}^d v_i- 2 \sum_{j\in V_k}v_k:  \xb^\vb \in \omega_R\right\}.
$$
Therefore, \eqref{eq:first} is equivalent to
\begin{equation}
\label{eq-with-ek}
\sum_{i=1}^d u_i \geq d-r_k-E_k+2\sum_{j\in V_k} u_j \text{ for } k=1,\dots, n.
\end{equation}

Before computing $E_k$ we make a simple observation regarding $d$ and  the $r_i$'s.

\begin{Claim}
$2 r_i+2 \leq d$ for all $i=1,\dots, n-1$.
\end{Claim}

\begin{proof}
Indeed, if that were not the case, then $2r_n+2 \geq 2 r_{n-1} +2>d$, hence $2 r_n\geq 2 r_{n-1} \geq d-1$. This implies $r_n+r_{n-1} \geq d-1$, equivalently that $1=\sum_{i=1}^{n-2}r_i$, which is not possible in our setup. 
\end{proof}

Next we show that $E_k$ does not depend on $k$.

\begin{Claim}{\em $E_k=2$ for $k=1,\dots, n$.}
\end{Claim}

\begin{proof}
We fix $1\leq k \leq n$.
Clearly, $E_k \geq 2$, by \eqref{two}. 
Then $E_k=2$ once we find 
\begin{equation}
\label{good-v}
\xb^\vb \in \omega_R \text{ such that } \sum_{i=1}^d v_i=2 + 2\sum_{j\in V_k} v_j.
\end{equation}
Using Eqs. \eqref{zero}, \eqref{one}, \eqref{two}, and translating $v_i=r_i+s_i$ for $i=1,\dots, n$, we observe that finding $\vb$ as in \eqref{good-v} is equivalent to finding  integers $s_1,\dots, s_n$ such that
\begin{eqnarray}
\label{s-zero} s_1,\dots, s_n \geq 0,\\
\label{s-one} \sum_{i=1}^n s_i \geq 2 s_\ell +2 r_\ell +2-d, \text{ for } 1\leq \ell \leq n, \ell\neq k, \text{ and} \\
\label{s-two} \sum_{i=1}^n s_i=2 s_k+2+2r_k-d.
\end{eqnarray} 
The $s_\ell$ represents the sum of the components of $\vb$ from $V_\ell$, each decreased by one. 
Note that \eqref{s-two} already implies that $\sum_{i=1}^n s_i \equiv d \mod 2$. 

We have two cases to consider.

{\em Case $k=n$}:

We let $s_\ell= \lfloor d/2 \rfloor -r_\ell-1$ for $\ell=1,\dots, n-1$. 
For \eqref{s-two} to hold, we must let 
\begin{eqnarray*}
s_n &=& \sum_{i=1}^{n-1} s_i-2-2r_n+d=(n-1)\lfloor d/2 \rfloor -d+r_n-(n-1)-2-2r_n+d \\
      &=& (n-1)(\lfloor d/2\rfloor -1)-r_n-1 \geq 2(\lfloor d/2 \rfloor -1)-r_n-1 \geq d-r_n-2 \geq 0.
\end{eqnarray*}
For $\ell<n$, one has $s_\ell \geq 0$ by the previous Claim. Also,   $2s_\ell+2+2 r_\ell-d$ is either $0$ or $1$, depending on $d$ being even or odd.
 Therefore, \eqref{s-one} and \eqref{s-zero} are all verified.

{\em Case $1\leq k \leq n-1$}:

We let $s_n=0$ 
 and $s_\ell= \lfloor d/2 \rfloor -r_\ell-1$ for $\ell=1,\dots, n-1$ where $\ell\neq k$. 
For \eqref{s-two} to hold, we must let 
\begin{eqnarray}
\label{sk}
s_k &=& (\sum_{1\leq i \leq n-1, i\neq k}s_i )+s_n-2-2r_k+d. 
\end{eqnarray}
Clearly, $s_k\geq 0$ since 
$d\geq 2r_k+2$.
Arguing as in the other case, for $k\neq \ell<n$ one has $s_\ell \geq 0$ and \eqref{s-one} holds.
We are left to verify that 
\begin{equation}
\label{eq-last}
\sum_{i=1}^n s_i \geq 2 s_n+2 r_n+2-d.
\end{equation}
Substituting \eqref{s-two} into the left hand side term above,  \eqref{eq-last} is equivalent to 
\begin{equation*}
s_k+r_k \geq s_n+r_n.
\end{equation*}
Using \eqref{sk} we get that
\begin{eqnarray*}
s_k+r_k &=& (\sum_{1\leq i\leq n-1, i\neq k} s_i) +s_n+d-r_k-2 \\
              &=& (\sum_{1\leq i \leq n-1, i\neq k}s_i)+s_n+r_n+(d-r_k-r_n-2)\geq s_n+r_n, 
\end{eqnarray*}
 where for the latter inequality we used the observation that $d\geq r_k+r_n+2$ in our setup.
Consequently, $s_1,\dots, s_n$ fulfil \eqref{s-zero}, \eqref{s-one}, \eqref{s-two}, and $E_k=2$.
\end{proof}

We can now finish the proof  of  Theorem \ref{thm:ng-graphs}.

Let $m=x_1^{a_1}\dots x_d^{a_d}$ be a monomial generator for $\omega_R$. Then $\deg m=\sum_{i=1}^d a_i \geq 2+2\sum_{j\in V_k}a_j$ for all $k=1,\dots, n$. In particular, $\deg m \geq 2 r_n +2$.

Let $f=\xb^\ub/(x_1\cdots x_d)$ be a  monomial in $\omega_R^{-1}$, with $\xb^\ub \in \mathcal{B}$. By \eqref{eq-with-ek},
 $$
\deg \xb^\ub=\sum_{i=1}^d u_i \geq d-r_k-2+2\sum_{j\in V_k} u_j \text{ for all } k=1,\dots, n.
$$
 Since $d>r_n+2$ in our setup, we  find a component $k_1$ such that $\sum_{j\in V_{k_1}} u_j >0$.
 
The product $m\cdot f$ is a monomial in $R$ of degree at least 
$$
(2r_n+2)+(d-r_{k_1} -2 +2\sum_{j\in V_{k_1}}u_j)-d\geq 2r_n-r_{k_1}+2 \geq 3.
$$
Consequently, $\tr(\omega_R)=\omega_R\cdot \omega_R^{-1} \subsetneq \mm_R$, a contradiction with \eqref{eq:tracemax}.
\end{proof}
 
\section{Stable set rings}

In this section we consider an algebra generated by the monomials coming from the stable sets of a graph.

Let $G$ be a finite simple graph on $[n]$ and $E(G)$ is the set of edges of $G$.    A subset $C \subset [n]$ is a {\em clique} of $G$ if $\{i, j\} \in E(G)$ for all $i, j \in C$ with $i \neq j$.  A subset $W \subset [n]$ is {\em stable} in $G$ if $\{i, j\} \not\in E(G)$ for all $i, j \in W$ with $i \neq j$.  In particular, the empty set as well as each $\{ i \} \subset [n]$ is both a clique of $G$ and a stable subset of $G$.  Let $\Delta(G)$ denote the {\em clique complex} of $G$ which is the simplicial complex on $[n]$ consisting of all cliques of $G$.  Let $\delta$ denote the maximal cardinality of cliques of $G$.  Thus $\dim \Delta(G) = \delta - 1$.  We say that $G$ is {\em pure} if $\Delta(G)$ is a pure simplicial complex, i.e. the cardinality of each maximal clique of $G$ is $\delta$.  
The {\em chromatic number} of a graph is the smallest number of colors that can be used for its vertices such that no adjacent vertices have the same color.
The graph $G$ is called {\em  perfect} if for all induced subgraphs $H$ of $G$, including $G$ itself, the chromatic number is equal to the maximal cardinality of cliques contained in $H$, see \cite[p.~165]{BB}.

Let $\KK[x_1, \ldots, x_n, t]$ denote the polynomial ring in $n + 1$ variables over the field $\KK$.  If, in general, $W \subset [n]$, then $x^W t$ stands for the squarefree monomial 
\[
x^W t = \big(\prod_{i \in W}x_i \big) \cdot t \in \KK[x_1, \ldots, x_n, t].
\]  

Let ${\rm Stab}_\KK(G)$ denote the subalgebra of $\KK[x_1, \ldots, x_n]$ which is generated by those $x^W t$ for which $W$ is a stable set of $G$.  Letting $\deg(x^W t)=1$ for any stable set $W$,  the algebra ${\rm Stab}_\KK(G)$ becomes standard graded.  We call ${\rm Stab}_\KK(G)$ the {\em stable set ring} of $G$.

 It is known \cite[Example 1.3 (c)]{OH} that ${\rm Stab}_\KK(G)$ is normal if $G$ is perfect.  It follows that, when $G$ is perfect, ${\rm Stab}_\KK(G)$ is spanned over $\KK$ by those monomials $(\prod_{i=1}^{n} x_i^{a_i}) t^q$ with $\sum_{i \in C} a_i \leq q$ for each maximal clique $C$ of $G$.  Furthermore, the canonical module $\omega_{{\rm Stab}_\KK(G)}$ of ${\rm Stab}_\KK(G)$ is spanned over $\KK$ by those monomials $(\prod_{i=1}^{n} x_i^{a_i}) t^q$ with each $a_i > 0$ and with $\sum_{i \in C} a_i < q$ for each maximal clique $C$ of $G$.  Thus \cite[Theorem 2.1 (b)]{OHjct} implies that ${\rm Stab}_\KK(G)$ is Gorenstein if and only if $G$ is pure.  

The following lemma captures a sufficient combinatorial condition for ${\rm Stab}_\KK(G)$ to be nearly Gorenstein. 

\begin{Lemma}
\label{perfect}
Let $G$ be a finite simple  perfect graph such that ${\rm Stab}_\KK(G)$ is nearly Gorenstein.
Then every connected component of $G$ is pure.
\end{Lemma}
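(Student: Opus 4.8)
The plan is to work directly with the monomial (lattice-point) descriptions of $R:={\rm Stab}_\KK(G)$ and its canonical module $\omega:=\omega_R$ recalled above, and to exploit the identity $\tr(\omega)=\omega\cdot\omega^{-1}$ from \cite[Lemma 1.1]{HHS}. Write a Laurent monomial as $\xb^{\ab}t^{q}$ with $\ab=(a_1,\dots,a_n)\in\ZZ^n$ and $q\in\ZZ$. By normality, $\xb^{\ab}t^q\in R$ iff $a_i\ge 0$ for all $i$ and $\sum_{i\in C}a_i\le q$ for every maximal clique $C$ of $G$, while $\xb^{\ab}t^q\in\omega$ iff $a_i\ge 1$ for all $i$ and $\sum_{i\in C}a_i\le q-1$ for every maximal clique $C$. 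The first thing I would do is compute $\omega^{-1}$ from the colon condition $\xb^{\bb}t^p\cdot\omega\subseteq R$. As in the edge-ring computation of $\omega_R^{-1}$ in the previous section, $\omega^{-1}$ is again a module generated by monomials, and a short extremal-point analysis (testing $\ab=\mathbf 1$ with $q$ large, and the facets where $\sum_{i\in C}a_i-q=-1$) shows that $\xb^{\bb}t^p\in\omega^{-1}$ iff $b_i\ge -1$ for all $i$ and $\sum_{i\in C}b_i\le p+1$ for every maximal clique $C$.

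Next I would set up the degree bookkeeping. Let $\delta$ (resp.\ $\mu$) denote the largest (resp.\ smallest) cardinality of a maximal clique of $G$; since maximal cliques of $G$ are exactly the maximal cliques of its connected components, $\delta$ and $\mu$ are the max and min of the $\delta_i$ and of the smallest facet sizes across components. From the descriptions above, every monomial of $\omega$ has $t$-degree at least $\delta+1$ and every monomial of $\omega^{-1}$ has $t$-degree at least $-\mu-1$, so every monomial generator of $\tr(\omega)=\omega\cdot\omega^{-1}$ has degree at least $\delta-\mu$. Now assume for contradiction that ${\rm Stab}_\KK(G)$ is nearly Gorenstein but some connected component, say $G_1$, is not pure. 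Near Gorensteinness gives $\mm_R\subseteq\tr(\omega)$, and since $t=\xb^{\emptyset}t\in\mm_R$ has degree $1$, the degree bound forces $\delta-\mu\le 1$; as $G_1$ is not pure we must have $\delta-\mu=1$, and every maximal clique of $G$ has cardinality $\mu$ or $\mu+1=\delta$. I would then produce a vertex lying in two maximal cliques of different sizes by a connectivity argument: if every vertex $w$ had all its maximal cliques of one common size $\sigma(w)$, then for each edge $\{w,w'\}$ (which extends to a maximal clique through both endpoints) one gets $\sigma(w)=\sigma(w')$, so $\sigma$ is constant on the connected $G_1$ and $G_1$ is pure, a contradiction. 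Hence there is a vertex $u$ of $G_1$ with $u\in C$ for a maximal clique $C$ of size $\delta$ and $u\in C'$ for a maximal clique $C'$ of size $\mu$.

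Finally I would show $x_ut=\xb^{\eb_u}t\notin\tr(\omega)$, contradicting $\mm_R\subseteq\tr(\omega)$. Since $\deg(x_ut)=1=\delta-\mu$ and every generator of $\tr(\omega)$ has degree at least $\delta-\mu$, membership would force $x_ut=\xb^{\ab}t^q\cdot\xb^{\bb}t^p$ exactly, with $\xb^{\ab}t^q\in\omega$, $\xb^{\bb}t^p\in\omega^{-1}$, and $q+p=1$; the degree bounds $q\ge\delta+1$, $p\ge-\mu-1=-\delta$ then pin down $q=\delta+1$ and $p=-\mu-1$. Feeding these values into the inequalities characterizing $\omega$ and $\omega^{-1}$ together with $\ab+\bb=\eb_u$ forces $a_i=1,\,b_i=-1$ for $i\ne u$ and $a_u\in\{1,2\}$. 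The facet $C'$ through $u$ of size $\mu$ gives $\sum_{i\in C'}b_i\le-\mu$, i.e.\ $b_u\le-1$, so $a_u=2$; but then the facet $C$ through $u$ of size $\delta$ gives $\sum_{i\in C}a_i=a_u+(|C|-1)=\delta+1>\delta$, contradicting $\sum_{i\in C}a_i\le\delta$. Hence $x_ut\notin\tr(\omega)$, the desired contradiction, and every connected component of $G$ is pure.

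The main obstacle here is conceptual rather than computational. One is tempted to imitate the connected case by seeking an edge with one endpoint in a maximum clique and the other in no maximum clique, but such an edge need not exist: two triangles joined by a single edge form a perfect, non-pure graph in which every vertex lies in a triangle. The key idea that rescues the argument is to replace that nonexistent edge by a single vertex $u$ lying in two maximal cliques of \emph{different} cardinalities — whose existence is guaranteed precisely by connectivity via the $\sigma$-argument — and to test the inclusion $\mm_R\subseteq\tr(\omega)$ against the degree-one generator $x_ut$, which the explicit description of $\omega^{-1}$ shows cannot lie in the trace.
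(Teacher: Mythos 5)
Your proof is correct, and it takes a genuinely different---and in fact more robust---route than the paper's. The paper's proof shares your overall strategy: it uses the same monomial descriptions of $R$ and $\omega_R$, bounds the $t$-degrees of monomials in $\omega_R$ and $\omega_R^{-1}$, and analyzes an exact factorization of a degree-one monomial of $\mm_R$ inside $\tr(\omega_R)=\omega_R\cdot\omega_R^{-1}$. But its combinatorial pivot is an \emph{edge} rather than a vertex: for a non-pure connected component $G_1$ with maximal clique size $\delta_1$, the paper asserts that there is an edge $\{i_0,j_0\}\in E(G_1)$ with $i_0$ in a clique of cardinality $\delta_1$ and $j_0$ in no such clique, and then derives a contradiction by showing that $w'\cdot x_{j_0}v$, where $w'\in\omega_R^{-1}$ comes from a factorization of $x_{i_0}t$ and $v=x_1\cdots x_nt^{\delta+1}(x_{i_0}x_{j_0})^{\delta-\delta_1}$, would have to equal $x^Wt$ with $W$ a stable set containing the edge $\{i_0,j_0\}$. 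The obstacle you flag in your closing paragraph is precisely the weak point of this argument: such an edge need not exist, and your example (two triangles joined by a bridge, which is chordal, hence perfect, connected and non-pure, yet every vertex lies in a triangle) is a graph the paper's published proof does not cover. A second, smaller issue is that even when the edge exists, the paper's auxiliary monomial $v$ can fail to lie in $\omega_R$ once $\delta\geq\delta_1+2$ (e.g.\ a triangle with a pendant edge together with a $K_5$ component: the maximal clique $\{i_0,j_0\}$ then violates $\sum_{i\in C}v_i\leq\delta$). Your argument avoids both problems: the preliminary global bound $\delta-\mu\leq 1$, obtained by testing $t$ against the degree bound on $\tr(\omega_R)$, collapses all maximal clique sizes to $\{\mu,\mu+1\}$; the $\sigma$-connectivity argument always produces a vertex $u$ on two maximal cliques of different sizes; and your exact inequality description of $\omega_R^{-1}$ (namely $b_i\geq-1$ and $\sum_{i\in C}b_i\leq p+1$), which is sharper than the partial bounds the paper extracts, is what lets you run the final exponent computation against the \emph{small} clique $C'$ as well as the large one. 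In short, same general skeleton, but your vertex-based pivot and exact computation of $\omega_R^{-1}$ repair a genuine gap in the paper's own proof.
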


\begin{proof}
Assume $V(G)=[n]$. Denote $R={\rm Stab}_\KK(G)$. 
Since each $x_i t$ as well as $t$ belongs to $R$, the quotient field of $R$ is the rational function field $\KK(x_1, \ldots, x_n, t)$ over $\KK$.  

Suppose $G_1$ is a connected component of $G$ which is not pure.
Let $\delta$ and $\delta_1$  denote the maximal cardinality of cliques of $G$ and of $G_1$, respectively.
Then there is an edge $\{i_0, j_0\} \in E(G_1)$ for which $i_0$ belongs to a clique $C$ of $G$ with $|C| = \delta_1$ and for which $j_0$ belongs to {\em no} clique $C$ of $G$ with $|C| = \delta_1$.

Let $z=\prod_{i=1}^{n} x_i^{a'_i} t^{q'} \in  \omega_R^{-1}$. Set $v_1=x_1\cdots x_n t^{\delta+1}$.
It is easy to check that $v_1\in \omega_R$ and that each monomial belonging to $\omega_R$ is divisible (in $\KK[x_1,\dots, x_n, t]$) by $v_1$. Hence  $a_i \geq -1$ for all $i$. Clearly, $x_{j_0}v_1\in \omega_R$ and $1\neq x_{j_0}v_1z\in R$, hence $q'\geq -\delta$.

Since $G$ is not pure, $R$ is not a Gorenstein ring and thus
$$
\tr(\omega_R)=\omega_R\cdot \omega_R^{-1}=\mm_R.
$$

Let $w' = \prod_{i=1}^{n} x_i^{a'_i} t^{q'} \in \omega_R^{-1}$ and $w = \prod_{i=1}^{n} x_i^{a_i} t^{q} \in \omega_R$ with $w'w = x_{i_0}t$.  Since $q' \geq - \delta$ and $q \geq \delta + 1$, one has $q' = - \delta$ and $q = \delta + 1$.

Let $v = x_1x_2\cdots x_n t^{\delta+1}\cdot x_{i_0}^{\delta-\delta_1}$.  One has $v\in \omega_R$ and $x_{j_0}v\in \omega_R$. 
We claim that  $w'\cdot x_{j_0}v \in \mm_R$ is divisible by $x_{i_0}x_{j_0}t$, but it is not divisible by $t^2$. 
This is clear when $\delta>\delta_1$. In case $\delta=\delta_1$, since $i_0$ belongs to a clique $C$ of $G$ with $|C| = \delta$, one has $a_{i_0} = 1$.  Thus $a'_{i_0} = 0$ and the claim is verified. 

 Thus $w'\cdot x_{j_0}v$ must be of the form $x^W t$, where $W$ is a stable set of $G$, which contradicts $\{i_0, j_0\} \in E(G)$.  Hence $\mm_R \subsetneq \tr(\omega_R)$, as desired.   
\end{proof}

Recall that the $a$-invariant of any graded algebra $R$ with canonical module $\omega_R$ is defined as 
$a(R)= -\min\{i: (\omega_R)_i\neq 0\}$. 
 
\begin{Corollary}
\label{a-inv}
If $G$ is a perfect graph   then $a({\rm Stab}_\KK(G))=-\dim \Delta(G)-2$.
\end{Corollary}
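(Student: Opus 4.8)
The plan is to read off the minimal degree of a nonzero homogeneous element of $\omega_R$ directly from the monomial description of the canonical module recalled above, where $R={\rm Stab}_\KK(G)$, and then to convert this into the $a$-invariant via the definition $a(R)=-\min\{i:(\omega_R)_i\neq 0\}$.

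First I would fix the grading carefully. Since $R$ is standard graded with each generator $x^W t$ of degree one, and the exponent of $t$ is additive under multiplication, the degree of a monomial $(\prod_{i=1}^n x_i^{a_i})t^q$ of $R$ equals its $t$-exponent $q$. Hence, for each $q$, the graded component $(\omega_R)_q$ is the $\KK$-span of the monomials $(\prod_{i=1}^n x_i^{a_i})t^q$ with $a_i\geq 1$ for all $i$ and $\sum_{i\in C}a_i<q$ for every maximal clique $C$ of $G$.

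Next I would determine $m=\min\{q:(\omega_R)_q\neq 0\}$. Fixing $q$, a valid exponent vector exists precisely when one can choose integers $a_i\geq 1$ with $\max_C\sum_{i\in C}a_i\leq q-1$, the maximum taken over the maximal cliques $C$ of $G$. Since every $a_i\geq 1$, the choice $a_i=1$ for all $i$ simultaneously minimizes each sum $\sum_{i\in C}a_i=|C|$, so the smallest achievable value of $\max_C\sum_{i\in C}a_i$ is $\max_C|C|=\delta$. Therefore a monomial in $\omega_R$ of degree $q$ exists if and only if $q\geq \delta+1$, an extremal one being $x_1\cdots x_n\,t^{\delta+1}$; this shows $m=\delta+1$.

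Finally I would combine the two computations: $a(R)=-m=-(\delta+1)$, and using $\dim\Delta(G)=\delta-1$ as recorded in the text yields $a(R)=-\dim\Delta(G)-2$. The argument reduces to a single minimization, so I do not expect a genuine obstacle; the only point requiring a line of care is the claim that the degree of a monomial equals its $t$-exponent, which holds because $R$ is generated in degree one by elements of $t$-degree one, and the verification that setting all $a_i=1$ is simultaneously optimal for every maximal clique.
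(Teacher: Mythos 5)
Your proof is correct and takes essentially the same approach as the paper: both identify $x_1\cdots x_n\,t^{\delta+1}$ as a minimal-degree monomial of $\omega_{{\rm Stab}_\KK(G)}$ and observe that every monomial of the canonical module has $t$-degree at least $\delta+1$, then convert via $\dim\Delta(G)=\delta-1$. The only cosmetic difference is that the paper cites the divisibility of every monomial of $\omega_{{\rm Stab}_\KK(G)}$ by $v=x_1\cdots x_n\,t^{\delta+1}$ from the proof of Lemma~\ref{perfect}, whereas you re-derive this minimization directly from the inequality description of the canonical module.
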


\begin{proof}
Let $\delta$ be the maximal size of a clique in $G$.
From the proof of the Lemma~\ref{perfect}, $v=x_1\cdots x_nt^{\delta+1}$ is in $(\omega_{{\rm Stab}_\KK(G)})_{\delta+1}$ and $v$ divides every monomial in    $\omega_{{\rm Stab}_\KK(G)}$. This gives the conclusion. 
\end{proof}

\begin{Theorem}
\label{ngstable}
Let $G$ be a finite simple graph with $G_1, \ldots, G_s$ its connected components and suppose that $G$ is perfect.  Let $\delta_i$ denote the maximal cardinality of cliques of $G_i$.  Then ${\rm Stab}_\KK(G)$ is nearly Gorenstein if and only if each $G_i$ is pure and $|\delta_i - \delta_j| \leq 1$ for $1 \leq i < j \leq s$.
\end{Theorem}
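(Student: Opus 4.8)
The plan is to turn the combinatorial descriptions of $R:={\rm Stab}_\KK(G)$ and $\omega_R$ recalled above into an explicit description of the anti-canonical module $\omega_R^{-1}$, and then to test the monomial generators of $\mm_R$ against $\tr(\omega_R)=\omega_R\cdot\omega_R^{-1}$. I write a monomial of the ambient Laurent ring as $\xb^{\ub}t^{q}$ with $\ub=(u_1,\dots,u_n)\in\ZZ^n$, $q\in\ZZ$, and I use that every clique of $G$ lies in a single connected component, so the maximal cliques of $G$ are exactly the maximal cliques of $G_1,\dots,G_s$ and $\delta=\max_i\delta_i$; in particular $\mm_R$ is generated by the monomials $x^Wt$ with $W$ stable.

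First I would determine $\omega_R^{-1}$. As $\omega_R$ is spanned by monomials, so is $\omega_R^{-1}$, and $\xb^{\bb}t^{p}\in\omega_R^{-1}$ iff $\xb^{\bb}t^{p}\cdot\xb^{\ab}t^{q}\in R$ for every generator $\xb^{\ab}t^{q}$ of $\omega_R$. Unwinding the membership conditions, this becomes $b_i\ge -1$ for all $i$ (the bound attained at the minimal generator $x_1\cdots x_n t^{\delta+1}$, where all $a_i=1$) together with $\sum_{i\in C}b_i\le p+m_C$ for every maximal clique $C$, where $m_C=\min\{q-\sum_{i\in C}a_i:\xb^{\ab}t^{q}\in\omega_R\}$. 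The key point is $m_C=1$ for every maximal clique $C$: the inequality $q-\sum_{i\in C}a_i\ge 1$ is immediate, and equality is attained by inflating the exponents on $C$ (take $a_i$ large for $i\in C$, $a_i=1$ otherwise) so that $C$ dominates every other maximal clique. Thus $\omega_R^{-1}=\{\xb^{\bb}t^{p}:b_i\ge -1,\ \sum_{i\in C}b_i\le p+1\text{ for every maximal clique }C\}$; when $G$ is pure this reduces to the principal module generated by $(x_1\cdots x_n)^{-1}t^{-(\delta+1)}$, recovering $\tr(\omega_R)=R$.

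Next I would reduce membership in the trace to a feasibility statement. Since $\omega_R$ is an $R$-module and all modules involved are monomial, a monomial $\xb^{\cb}t^{r}$ lies in $\tr(\omega_R)$ iff it factors as $\xb^{\ab}t^{q}\cdot\xb^{\bb}t^{p}$ with the first factor in $\omega_R$ and the second in $\omega_R^{-1}$. Writing $\bb=\cb-\ab$ and eliminating $q$ (which must satisfy $q\ge 1+\max_{C}\sum_{i\in C}a_i$ from $\omega_R$ and $q\le r+1-\max_{C}(\sum_{i\in C}c_i-\sum_{i\in C}a_i)$ from $\omega_R^{-1}$), a valid factorization exists iff there is an integer vector $\ab$ with $1\le a_i\le c_i+1$ such that
\[
\max_{C}\sum_{i\in C}a_i\;+\;\max_{C}\Big(\sum_{i\in C}c_i-\sum_{i\in C}a_i\Big)\;\le\;r .
\]
Splitting both maxima over the components $G_j$ rewrites the left-hand side as $\max_j A_j+\max_j B_j$ with per-component maxima $A_j,B_j$. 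This is the criterion I will apply to the generators $x^Wt$ of $\mm_R$, for which $\cb$ is the indicator vector of the stable set $W$ and $r=1$.

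For the converse direction I would invoke Lemma~\ref{perfect} to get that every $G_i$ is pure, whence $A_j=\delta_j$, and for the generator $t$ (where $\cb=0$ forces $a_i=1$, so $B_j=-\delta_j$) the criterion reads $\delta-\min_j\delta_j\le 1$; since $t\in\mm_R\subseteq\tr(\omega_R)$ when $R$ is nearly Gorenstein, the gap condition follows. For the forward direction, assuming purity and $|\delta_i-\delta_j|\le 1$ (so each $\delta_j\in\{\delta-1,\delta\}$), I would verify each $x^Wt$ by the explicit choice: on a component of clique size $\delta-1$ meeting $W$, raise the exponents on $W$ by one; on every other component keep $a_i=1$. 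Using purity and that a clique meets a stable set in at most one vertex, this gives $\max_j A_j=\delta$ and $\max_j B_j\le 1-\delta$, so the left-hand side is $\le 1=r$ and $x^Wt\in\tr(\omega_R)$, i.e.\ $\mm_R\subseteq\tr(\omega_R)$. The main obstacle is the bookkeeping in this last step: checking, across the four cases (tall or short component, meeting or missing $W$), that the boost-short/keep-tall choice simultaneously prevents $\max_j A_j$ from exceeding $\delta$ while forcing $\max_j B_j$ down to $1-\delta$; this rests on the clean evaluation $m_C=1$, which underlies the entire description of $\omega_R^{-1}$.
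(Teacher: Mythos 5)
Your proof is correct, but it follows a genuinely different route from the paper's. Both arguments use Lemma \ref{perfect} in the same way, to extract purity of each component from the nearly Gorenstein hypothesis; the difference is everything after that. The paper observes that ${\rm Stab}_\KK(G)$ is the Segre product of ${\rm Stab}_\KK(G_1),\dots,{\rm Stab}_\KK(G_s)$, each of which is Gorenstein once the $G_i$ are pure, and then both the gap condition $|\delta_i-\delta_j|\le 1$ and the entire ``if'' direction are read off from \cite[Corollary 4.16]{HHS}, which characterizes nearly Gorenstein Segre products of Gorenstein algebras by their $a$-invariants; Corollary \ref{a-inv}, i.e. $a({\rm Stab}_\KK(G_i))=-\delta_i-1$, converts that condition into clique sizes. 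You instead make everything explicit: your evaluation $m_C=1$ is correct (the inflation $a_i=n$ on $C$, $a_i=1$ elsewhere, $q=n|C|+1$ works because $|C\cap C'|\le |C|-1$ for distinct maximal cliques), the resulting description of $\omega_R^{-1}$ and the feasibility criterion $\max_j A_j+\max_j B_j\le r$ are sound, testing the generator $t$ gives exactly $\delta-\min_j\delta_j\le 1$ once purity is known (and purity is genuinely needed there: without Lemma \ref{perfect} the criterion at $t$ only bounds $\delta$ minus the smallest size of a maximal clique, which does not imply purity), and the boost-short/keep-tall choice does give $A_j\le\delta$ and $B_j\le 1-\delta$ in all four cases, so $\mm_R\subseteq\tr(\omega_R)$. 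What your approach buys is self-containedness and extra information: you obtain a description of $\omega_R^{-1}$ and a membership test for the full trace ideal $\tr(\omega_R)$ valid for every perfect graph, pure or not, which in principle locates the non-Gorenstein locus and avoids citing the Segre-product result. What the paper's approach buys is brevity and a structural explanation of where the condition $|\delta_i-\delta_j|\le 1$ comes from: it is precisely the general $a$-invariant criterion for Segre products of Gorenstein rings, specialized via Corollary \ref{a-inv}.
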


\begin{proof}
Suppose that ${\rm Stab}_\KK(G)$ is nearly Gorenstein.  It follows   from Lemma \ref{perfect} that each $G_i$ is pure and each ${\rm Stab}_\KK(G_i)$ is Gorenstein.  Since ${\rm Stab}_\KK(G)$ is the Segre product of ${\rm Stab}_\KK(G_1), \ldots, {\rm Stab}_\KK(G_s)$, it follows from \cite[Corollary 4.16]{HHS}  and \cite[Corollary 2.8]{HMP} that
$$
|a({\rm Stab}_\KK(G_i))-a({\rm Stab}_\KK(G_j))|\leq 1 \text{ for all }i,j.
$$ 
Corollary~\ref{a-inv} yields $|\delta_i - \delta_j| \leq 1$ for $1 \leq i < j \leq s$.  Furthermore, the ``If'' part also follows from \cite[Corollary 4.16]{HHS} and \cite[Corollary 2.8]{HMP}.
\end{proof} 

\begin{Corollary}
Let $G$ be a finite simple graph which is pefect and connected. Then the ring ${\rm Stab}_\KK(G)$ is nearly Gorenstein if and only if it is Gorenstein.
\end{Corollary}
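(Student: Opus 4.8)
The plan is to deduce this corollary directly from Theorem~\ref{ngstable} by specializing to the case of a single connected component. Since $G$ is connected, we have $s=1$, so the family of connected components consists of $G_1=G$ alone, with a single maximal clique cardinality $\delta_1$. I would first observe that under this specialization the inter-component condition $|\delta_i-\delta_j|\leq 1$ for $1\leq i<j\leq s$ becomes vacuous, as there are no pairs of distinct indices to compare. Hence the criterion furnished by Theorem~\ref{ngstable} collapses to the single requirement that $G_1=G$ be pure.

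Next I would invoke the Gorenstein criterion recalled earlier in the section, namely that by \cite[Theorem 2.1 (b)]{OHjct} the ring ${\rm Stab}_\KK(G)$ is Gorenstein if and only if $G$ is pure. Combining this with the previous paragraph gives the chain of equivalences
\[
{\rm Stab}_\KK(G) \text{ is nearly Gorenstein} \iff G \text{ is pure} \iff {\rm Stab}_\KK(G) \text{ is Gorenstein},
\]
which is exactly the assertion of the corollary. The perfectness of $G$ is used throughout as the standing hypothesis that guarantees normality and Cohen--Macaulayness of ${\rm Stab}_\KK(G)$, so that the canonical module description and the trace computation underlying Theorem~\ref{ngstable} apply.

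There is essentially no genuine obstacle here: the statement is a degenerate instance of Theorem~\ref{ngstable}. The only point meriting a sentence of care is the vacuity of the distance condition when $s=1$, which ensures that nearly Gorenstein imposes no constraint beyond purity of the single component. Once that is noted, the equivalence with the Gorenstein property is immediate from the known classification \cite{OHjct}.
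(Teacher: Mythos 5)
Your proof is correct and matches the paper's (implicit) argument: the corollary is exactly the $s=1$ specialization of Theorem~\ref{ngstable}, where the condition $|\delta_i-\delta_j|\leq 1$ is vacuous, combined with the Ohsugi--Hibi criterion that ${\rm Stab}_\KK(G)$ is Gorenstein if and only if $G$ is pure. Nothing further is needed.
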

\medskip

{\bf Acknowledgement.}   The first author was partially supported by JSPS KAKENHI 19H00637.

\medskip

{}

\begin{thebibliography}{}
 
\bibitem{BB} B.~Bollob\'as, \textit{Modern Graph Theory},  GTM 184, Springer, 1998.

\bibitem{BH} W.~Bruns, J.~Herzog, \textit{Cohen--Macaulay Rings}, Revised Ed., Cambridge Stud. Adv. Math., vol. {\bf 39}, Cambridge University Press, Cambridge, 1998.

\bibitem{BVV} W.~Bruns, W.~Vasconcelos, R.~Villarreal, \textit{Degree bounds in monomial subrings}, Illinois Journal of Mathematics {\bf 41} (1997), no. 3, 341--353.

\bibitem{Danilov} V.I.~Danilov, \textit{The geometry of toric varieties}, Russian Math. Surveys, {\bf 33} (1978), 97--154.

\bibitem{DeNegri-Hibi} E.~De~Negri, T.~Hibi, \textit{Gorenstein Algebras of Veronese Type}, J. Algebra {\bf 193} (1997), 629--639.


\bibitem{GTT} S.~Goto, R.~Takahashi, N.~Taniguchi,   \textit{Almost Gorenstein rings--towards a theory of higher dimension},  J. Pure Appl. Algebra {\bf 219} (2015), 2666--2712.

\bibitem{HHS} J.~Herzog, T.~Hibi, D.I.~Stamate, \textit{The trace of the canonical module},  Israel J. Math. {\bf 233} (2019), 133--165. 

\bibitem{HMP} J.~Herzog, F.~Mohammadi, J.~Page, \textit{Measuring the non-Gorenstein locus of Hibi rings and normal affine semigroup rings}, J. Algebra {\bf 540} (2019), 78--99. 

\bibitem{TH87}
T.~Hibi, \textit{Distributive lattices, affine semigroup rings and algebras with
straightening laws},  in: ``Commutative Algebra and Combinatorics''
(M. Nagata and H. Matsumura, Eds.), Advanced Studies in Pure Math.,
Volume 11, North--Holland, Amsterdam, 1987, pp. 93--109.

\bibitem{HM-2020} A.~Higashitani, K.~Matsushita, \textit{Conic divisorial ideals and non-commutative crepant resolutions of edge rings of complete multipartite graphs},  arXiv:2011.07714 [math.AC].

\bibitem{HM-2021} A.~Higashitani, K.~Matsushita, \textit{Levelness versus almost gorensteinness of edge rings of complete multipartite graphs}, arXiv:2102.02349 [math.AC]

\bibitem{Hochster} M.~ Hochster, \textit{Rings of invariants of tori, Cohen-Macaulay rings generated by monomials,
and polytopes}, Ann. of Math. {\bf 96} (1972), 318--337.

\bibitem{OH-jalg} H.~Ohsugi, T.~Hibi, \textit{Normal polytopes arising from finite graphs},  J. Algebra {\bf 207} (1998), 409--426.

\bibitem{OH-Ill}  H.~Ohsugi, T.~Hibi, \textit{Compressed polytopes, initial ideals and complete multipartite graphs}, Illinois J. Math {\bf 44} (2000), 391--406.

\bibitem{OH} 
H.~Ohsugi, T.~Hibi, \textit{Convex polytopes all of whose reverse lexicographic initial ideals are squarefree},   Proc. Amer. Math. Soc. {\bf 129} (2001), 2541--2546.

\bibitem{OHjct}
H.~Ohsugi, T.~Hibi, \textit{Special simplices and Gorenstein toric rings}, J. Combin. Theory, Ser. A {\bf 113} (2006), 718--725.

\bibitem{Stanley} R.~Stanley, \textit{Hilbert functions of graded algebras}, Adv. Math. {\bf 28} (1978), 57--83.

\bibitem{Villarreal} R.~H.~Villarreal, \textit{On the equations of the edge cone of a graph and some applications}, manuscripta math. {\bf 97} (1998), 309--317.


\end{thebibliography}
\end{document}